\pdfoutput=1
\documentclass[11pt]{amsart}
\usepackage{amsmath,amssymb,amsthm,mathtools}
\usepackage{enumitem}
\usepackage{needspace}
\usepackage[hidelinks]{hyperref}
\newtheorem{theorem}{Theorem}[section]
\newtheorem{proposition}[theorem]{Proposition}
\newtheorem{lemma}[theorem]{Lemma}
\newtheorem{corollary}[theorem]{Corollary}
\theoremstyle{definition}

\newtheorem{example}[theorem]{Example}
\newtheorem{remark}[theorem]{Remark}
\newcommand{\Z}{\mathbb Z}
\newcommand{\F}{\mathbb F}
\newcommand{\C}{\mathbb C}
\newcommand{\cH}{\mathcal H}
\newcommand{\Mod}{\operatorname{Mod}}
\newcommand{\rot}{\operatorname{rot}}
\newcommand{\Hom}{\operatorname{Hom}}
\newcommand{\id}{\operatorname{id}}
\newcommand{\one}{\mathbf 1}
\title[Non-planar Calabi--Yau Fillings and Quantum Computation]{Non-planar
Contact Structures with Calabi--Yau Fillings and Topological Quantum
Computation}
\author{Atsuhide Mori}
\address{Department of Mathematics, Osaka Dental University}
\email{mori-a@cc.osaka-dent.ac.jp}
\subjclass[2020]{Primary 57K33; Secondary 57M12, 53D35, 20F36, 81P68}
\keywords{non-planar open book, connected binding, Stein filling,
positive factorization, Calabi--Yau filling, simple branched cover,
liftable braid, Ising representation, Fibonacci representation}

\begin{document}
\begin{abstract}
This paper is a non-planar counterpart of the author's earlier work on
planar contact structures and topological quantum computation.  On a
planar page every branch point is paired by a liftable arc, and the
pairing matches the Ising representation exactly, in that fixing the
channels of the pairs already fixes the state.  Here the page has genus $g$ and
connected binding, presented as a simple degree-three cover of the
disk.  Every maximal system of disjoint liftable arcs then has exactly
$g$ members and leaves two branch points unused, and the half-twist
interchanging the two points is not liftable, although its cube is and
lifts to the identity.  Ising fusion does not see that leftover pair,
determining its channel from the other $g$ and acting there only by
scalars, whereas Fibonacci fusion can leave the channel undetermined.
The powers of the cube then act by ten distinct relative phases on a
single joint eigenspace, while the Calabi--Yau Stein filling stays
fixed.  An integral residue criterion supplies the Calabi--Yau test, a
factorization localized on a contact connected sum is automatically
Calabi--Yau, and, unlike in the planar case, a Calabi--Yau
factorization need not be the shortest.
\end{abstract}
\maketitle

\section{Introduction}

This paper is a non-planar counterpart of the author's earlier work on
planar contact structures \cite{PlanarCY}.  On a planar page every
branch point is paired by a liftable arc, and that pairing matches the
Ising representation exactly.  Here the page has positive genus and the
binding is connected, so that two branch points are left over.  The
leftover pair is invisible to Ising fusion and visible to Fibonacci
fusion, and that is where the two models part.  Along the way we
examine the Calabi--Yau condition on the Stein fillings that the
geometry supplies.

The quantum side of the question concerns anyons, particles in two
dimensions whose exchanges are recorded by braids rather than by
permutations.  A configuration of several anyons is
described by the channels into which they can fuse, and a braid acts
unitarily on the space of these channels \cite{NSSFD}.  Two models
dominate the subject, Ising and Fibonacci.  Ising anyons are the ones
associated with Majorana modes, and braiding them alone is not
universal for quantum computation, whereas Fibonacci braiding is
\cite{FLW}.  Both appear below, on the same cover and the same system of arcs.

A braid representation assigns quantum operations to braids.
Lifting a suitable braid through a branched cover gives the monodromy of
an open book \cite{Giroux}, and a positive allowable factorization of
that monodromy specifies a Stein filling.
We take the topology relevant to a quantum operation to be the lift and
its factorization, and not the braid alone.  This datum is finer than
the braid, and the geometry accepts it only for positive factorizations,
so the invertibility of the braid group is given up.  It is not needed.
The representations used below have finite order on each generator, so
an inverse can be traded for a positive power, and the monoid of
positive liftable words already realizes every operation, exactly for
Fibonacci and projectively for Ising (Section~\ref{sec:realization}).
Following \cite{PlanarCY}, we call a Stein filling $W$ Calabi--Yau (CY)
when
\[
                 c_1(W)=0\quad\text{in }H^2(W;\Z),
\]
and we call a positive allowable factorization CY when the Stein filling
it determines is CY.

The planar case shows why Ising is the model there.  A planar page with
$d$ boundary components, presented as a simple degree-$d$ cover of the
disk, has $2d-2$ branch points, and all of them are paired by $d-1$
disjoint liftable arcs whose lifted curves form a basis of the first
homology of the page.  On the quantum side the pair channels satisfy a
single total-parity relation, and the resulting joint eigenlines
exhaust the Ising state space, so the geometric pairing is already a
complete measurement.  Admissible cuts group these lines into
sectorwise tensor products \cite{PlanarCY}, a direction pursued in
\cite{MeasurementCuts}.

Concretely, fix a simple cover of degree three,
\[
 p:P=\Sigma_{g,1}\longrightarrow D^2,\qquad g\geq1.
\]
Its boundary monodromy is a $3$-cycle, and the number of branch points is
$n=2g+2$.
Here non-planar refers to the chosen page. Its genus is not assumed to be
minimal among the open books supporting the contact structure.
The first difference from the planar construction is that a maximal
system of disjoint liftable arcs pairs only $2g$ of the points.
Its lifted curves span a Lagrangian subspace of $H_1(P;\F_2)$ rather than
the whole of it, and two branch points remain.
A half-twist interchanging these two points, chosen disjoint from the
maximal system, is not liftable. Its cube is liftable and lifts to the
identity.

The question is whether this remaining pair can carry quantum data once
the liftable pair channels have been fixed, and the answer separates the
two models.  Ising fusion determines the last pair channel from the
other $g$ channels and the total charge, whereas Fibonacci fusion need
not.
In the Fibonacci case the cube at the remaining pair acts by a relative
phase within a fixed joint channel space.
Section~\ref{sec:comparison} proves this, and
Section~\ref{sec:realization} realizes the action while keeping one CY
Stein filling fixed.

Two geometric results are needed for this.  Section~\ref{sec:arcs}
proves the maximality statement and identifies the local kernel of the
lifting map, and Section~\ref{sec:cy} gives an integral CY criterion
adapted to the cover, with its consequences for cutting and for
minimality.

\section{Liftable pairs and the two remaining points}\label{sec:arcs}

Choose sheet labels $0,1,2$ and a Hurwitz system for the branch set $\Delta$.
The cover is described by a tuple of transpositions
\[
                 (t_1,\ldots,t_n),\qquad t_1\cdots t_n=C,
\]
where $C$ is a fixed $3$-cycle.  The Riemann--Hurwitz formula gives
$\chi(P)=3-n=1-2g$.  Write $L_p\subset B_n$ for the stabilizer of this
tuple under the Hurwitz action and
\[
                 \ell_p:L_p\longrightarrow\Mod(P\operatorname{rel}\partial P)
\]
for the lifting homomorphism.  The boundary is fixed pointwise.  For an
embedded arc $a$ joining two branch points and otherwise avoiding
$\Delta$, $h_a$ denotes its positive half-twist,
and $\tau_\gamma$ denotes the right-handed Dehn twist about $\gamma$.
The local monodromies at the ends of $a$ are transported along the arc,
and they depend on the arc, not just on its endpoints.  A product of positive
half-twists is called \emph{quasipositive}. The arcs need not be standard
adjacent arcs in a fixed braid coordinate system.  As in \cite{PlanarCY},
such a factorization is \emph{factorwise $p$-liftable} when every
half-twist factor belongs to $L_p$.

The following local alternative is standard in the theory of liftable
braids \cite[Lemma~2.3]{MulazzaniPiergallini}. We include the argument with the
boundary convention used here.

\begin{lemma}[The local alternative]\label{lem:local}
Let $D_a$ be a closed regular neighborhood of $a$ containing no other
branch point.
\begin{enumerate}[label=\textup{(\roman*)}]
\item If the two transported transpositions agree, $p^{-1}(D_a)$ is an
annulus together with a disk.  The half-twist is liftable and lifts to
the positive Dehn twist about the annular core $\gamma_a$.
\item If they differ, $p^{-1}(D_a)$ is a disk.  The local liftable group
is $\langle h_a^3\rangle$, and $\ell_p(h_a^3)=1$.
\end{enumerate}
\end{lemma}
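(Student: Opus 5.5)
The plan is to reduce everything to the restricted cover $p^{-1}(D_a)\to D_a$, which is a simple $3$-fold cover of a disk branched at exactly two points. Its monodromy around $\partial D_a$ is the product $t\,t'$ of the two transported transpositions, and the boundary convention (lifts fix $\partial P$ pointwise) is imposed on $\partial p^{-1}(D_a)$. Since $h_a$ and its powers are supported in $D_a$, a power $h_a^k$ lies in $L_p$ exactly when it is liftable for the local cover. The lift is then the local lift extended by the identity, and this extension is legitimate because the lift is required to fix the preimage of $\partial D_a$ pointwise. So both statements are local, and I would treat the two cases through the Hurwitz action $(t,t')\mapsto(t\,t'\,t^{-1},\,t)$ (or its inverse, depending on orientation convention) of $h_a$ on the pair.

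\emph{Case (i): $t=t'$, say $t=(01)$.} The boundary monodromy is trivial, so $p^{-1}(\partial D_a)$ is three circles. Sheet $2$ is unbranched and gives a disk. Sheets $0,1$ form a double cover of a disk branched at two points; by Riemann--Hurwitz it has $\chi=2\cdot1-2=0$ and two boundary circles, hence is an annulus. The Hurwitz action sends $(t,t)$ to $(t,t)$, so $h_a\in L_p$. For the lift I would invoke the standard Birman--Hilden picture for the hyperelliptic double cover of the annulus: the half-twist lifts to a homeomorphism of the annulus that is the identity near both boundary circles and rotates the core by $\pi$ relative to the ends. Such a map is the Dehn twist about the core $\gamma_a$, and the positivity of $h_a$ gives the right-handed twist. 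On the disk component the lift is the identity, by the boundary convention and Alexander's lemma.

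\emph{Case (ii): $t\ne t'$, say $t=(01)$ and $t'=(12)$.} Then $t\,t'$ is a $3$-cycle, so $p^{-1}(\partial D_a)$ is connected. Riemann--Hurwitz gives $\chi=3-2=1$, so $p^{-1}(D_a)$ is a disk. The Hurwitz action on ordered pairs of distinct transpositions with fixed $3$-cycle product permutes the three such pairs $((01),(12))$, $((12),(02))$, $((02),(01))$ cyclically; I will check this once explicitly. Hence the stabilizer in $\langle h_a\rangle\cong\Z$, which is the local braid group $B_2(D_a)$, is exactly $\langle h_a^3\rangle$. Finally, $\ell_p(h_a^3)$ is a mapping class of a disk fixing its boundary pointwise, so it is trivial by the Alexander lemma.

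The one step needing care is the precise identification in (i): that the lift is the \emph{positive} twist and not its inverse, and that it is rel boundary and not merely up to boundary rotation. I would settle this by an explicit model. Take $D_a$ to be the unit disk with branch points $\pm1/2$, and the double cover $w^2=z^2-1/4$ suitably cut off. The lift of the half-twist isotoped to the identity near $\partial D_a$ then visibly shears the annulus by one full turn in the right-handed direction, matching the orientation conventions of \cite{MulazzaniPiergallini}.
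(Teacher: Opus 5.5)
Your proposal is correct and follows essentially the same route as the paper. The orbit structure of $\langle t,t'\rangle$ and an Euler characteristic count identify $p^{-1}(D_a)$; the Hurwitz move cycles through the three ordered pairs with $3$-cycle product, so exactly $\langle h_a^3\rangle$ is liftable; and a lift fixing the boundary of a disk is trivial by Alexander's lemma. What you add, the reduction to the local cover and the explicit model for checking that the annular lift is the right-handed twist, are details the paper leaves implicit.
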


\begin{proof}
Equal transpositions generate a group with an orbit of size two and a
fixed sheet, so $p^{-1}(D_a)$ is an annulus together with a disk, and the
annular lift of the half-twist is a positive Dehn twist.  Distinct
transpositions generate $S_3$, so the preimage is connected, of Euler
characteristic one, and has one boundary circle because the product is a
$3$-cycle.  It is a disk.  The Hurwitz move
$(t,t')\mapsto(tt't^{-1},t)$ cycles through the three ordered pairs with
that product, so exactly the powers divisible by three are liftable, and
their lifts fix the boundary of a disk.
\end{proof}

We call an arc in case (i) \emph{liftable}.  A \emph{pair system} is a
collection of pairwise disjoint liftable arcs, with disjoint endpoints.
Maximality is with respect to inclusion.

\begin{theorem}[Maximal pair systems]\label{thm:pairs}
Every pair system extends to one of $g$ arcs, and none has more than $g$
arcs.  The lifted curves of every pair system are independent over
$\F_2$, and for a maximal system they span a Lagrangian of $H_1(P;\F_2)$.
Exactly two branch points remain.  Any arc
joining them and disjoint from the system is in case (ii) of
Lemma~\ref{lem:local}.
\end{theorem}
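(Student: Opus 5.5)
The plan is to \emph{cancel} the pairs of a pair system by surgery and reduce everything to a count on a simpler cover. Let $\{a_1,\ldots,a_k\}$ be a pair system with neighborhoods $D_{a_i}$, pairwise disjoint and containing no other branch points. By Lemma~\ref{lem:local}(i), the two transported transpositions at the ends of $a_i$ coincide, so the monodromy around $\partial D_{a_i}$ is $t\cdot t=1$. The cover over $D_{a_i}$ is therefore trivial near the boundary, and $p^{-1}(D_{a_i})$ is an annulus (core $\gamma_{a_i}$) together with a disk. I would replace each such piece by three disks, which is surgery of $P$ along the $\gamma_{a_i}$. The result is a simple degree-three cover $p'\colon P'\to D^2$ branched at the $n-2k$ remaining points, with boundary monodromy still $C$. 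Each surgery raises $\chi$ by $2$, so $\chi(P')=1-2g+2k$, and $P'$ has one boundary component.

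\textbf{Bound and independence.} Since the remaining transpositions have product the $3$-cycle $C$, they generate a transitive subgroup, so $P'$ is connected. Moreover a single transposition cannot have product a $3$-cycle, so at least two points remain: $n-2k\ge 2$, i.e.\ $k\le g$. Connectedness of $P'$ means that $P\setminus\bigcup\gamma_{a_i}$ is connected. For disjoint simple closed curves this is equivalent to $\F_2$-independence of their classes: a nontrivial $\F_2$-relation among disjoint curves would make a subcollection bound a subsurface, hence separate. The curves are pairwise disjoint, so they span an isotropic subspace for the mod $2$ intersection form. Since $\dim H_1(P;\F_2)=2g$, a system with $k=g$ spans a Lagrangian.

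\textbf{Extension (main obstacle).} Suppose $k<g$, so that $m=n-2k\ge 4$ points remain. Collapse each $D_{a_i}$ to an unbranched point and work with a Hurwitz system $(s_1,\ldots,s_m)$ for $p'$ in the disk, choosing all arcs to avoid the collapsed disks; this is possible since the collapsed points are not branch points. I must produce an arc joining two remaining points whose transported transpositions agree. If some adjacent pair $s_j=s_{j+1}$ already coincides, the standard arc works. Otherwise, I would use the Hurwitz move $(t,t')\mapsto(tt't^{-1},t)$ from the proof of Lemma~\ref{lem:local}. When $t\neq t'$, this move cycles through the three ordered pairs with product $tt'$, and it is realized by a half-twist on arcs in the complement of the system. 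Applying it at positions $(2,3)$, the value at position $2$ runs through all three transpositions with product $s_2s_3$. Since $s_2\neq s_3$, these are the three ordered pairs of distinct transpositions with that product, so position $2$ takes each of the three transposition values in turn. One of them equals $s_1$, and this produces an adjacent equal pair. The corresponding arc, transported back to $D^2$ and disjoint from the $D_{a_i}$, is liftable and extends the system. Iterating gives a system of exactly $g$ arcs. The delicate point is that the braid moves must stay in the complement of the collapsed disks, so that the new arc remains disjoint from the old ones; this is why I work in $D'$.

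\textbf{Leftover pair.} For a maximal system, which by the above has $g$ arcs, exactly $n-2g=2$ points remain. Take an arc $b$ joining them, disjoint from the system, and a neighborhood $D_b$. In $D^2$ minus all the $D_{a_i}$ and $D_b$, the boundary loop is a product of the loops around the $D_{a_i}$, which have monodromy $1$, and the loop around $D_b$. So the monodromy around $\partial D_b$, which is the product of the two transported transpositions at the ends of $b$, is conjugate to $C$, a $3$-cycle. Equal transpositions would give $1$, so they differ, and $b$ is in case (ii) of Lemma~\ref{lem:local}.
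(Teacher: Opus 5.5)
Your proposal is correct and follows essentially the paper's argument: connectedness of the cut (equivalently, surgered) surface gives $\F_2$-independence, the product $C$ forces two leftover points, and one Hurwitz move among three consecutive entries creates a new equal adjacent pair away from the given arcs. The paper packages the extension step as a full normal form plus a change of coordinates, and it gets case (ii) for the leftover arc by contradiction with the bound rather than by your direct monodromy computation; one small slip is that to get $n-2k\ge 2$ you must exclude zero remaining points (the empty product is $1\neq C$), not one, since $n-2k$ is even.
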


\begin{proof}
Remove disjoint regular neighborhoods of the arcs of a pair system
$a_1,\ldots,a_k$ from the base disk.  The monodromy group of the
remaining region contains the boundary $3$-cycle $C$, so its preimage is
connected, and $P$ cut along the $k$ annular cores is that preimage with
two half-annuli and a disk attached for each removed neighborhood, each
along a single boundary circle.  The cut surface is connected, so the
core classes are independent over $\F_2$.

There cannot be $g+1$ arcs.  Such a system would use all $2g+2$ branch
points, and in an adapted Hurwitz system its tuple would consist of
equal adjacent pairs, with product the identity rather than $C$.

For completion we use an elementary normal form.  Every tuple of $2h+2$
transpositions with product $C$ is Hurwitz equivalent to
\begin{equation}\label{eq:normal}
              (x_1,x_1,\ldots,x_h,x_h,A,B),\qquad A\ne B.
\end{equation}
Among the first three entries of a tuple of length at least four a
Hurwitz move produces an equal adjacent pair, which travels to the front
by the two-step moves $(a,x,x)\sim(axa,axa,a)$.  Removing it leaves a
shorter tuple with the same product, and length two is the base case.

Now put the given $k<g$ arcs into standard position, by the change of
coordinates principle.  The corresponding entries of the tuple are then
$k$ equal adjacent pairs, and the remaining $2(g-k)+2$ entries still
have product $C$.  Applying the normal form in a disk disjoint from the
given arcs and pulling back the $g-k$ additional standard pairs gives
the extension.

The $g$ lifted curves are disjoint and independent, so their classes
span an isotropic subspace of half the dimension of $H_1(P;\F_2)$, and
the intersection form is nondegenerate because $P$ has one boundary
component.  Finally an arc joining the unused points exists in the
complement of the system, and if it were liftable it could be added,
contradicting the upper bound.
\end{proof}

Fix a maximal system $\mathcal M=\{a_1,\ldots,a_g\}$ and choose such an
unused-pair arc $a_0$, with closed regular neighborhood disjoint from
$\mathcal M$.  Put $h_i=h_{a_i}$, including $h_0=h_{a_0}$.
Lemma~\ref{lem:local} gives
\begin{equation}\label{eq:kernel}
 L_p\cap\langle h_0\rangle=\langle h_0^3\rangle
                \subset\ker\ell_p.
\end{equation}
The statement is local and requires the arc $a_0$ to avoid $\mathcal M$.
Its endpoints alone do not determine its type.
The union of $a_0$ and $\mathcal M$ is a complete matching of the branch points,
but only the $g$ arcs in $\mathcal M$ lift to Dehn twists.

\begin{remark}\label{rem:notinH}
For $g\le2$, the element $h_0^3$ lies outside the subgroup $H$ generated
by all liftable half-twists.  To see this, let $A$ be the abelianization
of $\Mod(P\operatorname{rel}\partial P)$ and let $x\in A$ be the common
class of the twists about nonseparating curves.  Each generator of $H$
has writhe one and lifts to such a twist, so the class of
$\ell_p(\alpha)$ in $A$ is $(\operatorname{writhe}\alpha)x$ for every
$\alpha\in H$.  Here $A\cong\Z$ for $g=1$, since
$\Mod(\Sigma_{1,1}\operatorname{rel}\partial)\cong B_3$
\cite{FarbMargalit}, and $A\cong\Z/10$ for $g=2$ \cite{Korkmaz}.
In both cases $x$ generates $A$ and $3x\ne0$, whereas $h_0^3$ has writhe
three and lifts to the identity.  For $g\ge3$ the abelianization is
trivial, so this argument does not decide whether $h_0^3\in H$.
\end{remark}

\section{Stein fillings and the Calabi--Yau condition}\label{sec:cy}

A positive allowable factorization
$\lambda=(\tau_{\gamma_1},\ldots,\tau_{\gamma_M})$ of a monodromy $\phi$
on $P$ determines a positive allowable Lefschetz fibration (PALF), and
hence a Stein filling
$W_\lambda=W_{\gamma_1,\ldots,\gamma_M}$ \cite{LoiPiergallini}.
Here allowable means that each $[\gamma_j]$ is nonzero in $H_1(P;\Z)$.
Fix a global framing $v$ of $TP$.  The handle description identifies
$H^2(W_\lambda;\Z)$ with the cokernel of $u\mapsto(u([\gamma_j]))_j$ on
$H^1(P;\Z)$, and $c_1(W_\lambda)$ with the class of
$(\rot_v\gamma_j)_j$ \cite[Proposition~2.3]{Gompf}, so
\begin{equation}\label{eq:gompf}
 W_\lambda\text{ is CY}\quad\Longleftrightarrow\quad
 \exists u\in H^1(P;\Z):\ u([\gamma_j])=\rot_v\gamma_j\quad\forall j.
\end{equation}
In the planar case of \cite{PlanarCY} every rotation number is one.
The criterion is integral, and replacing $\Z$ by $\mathbb Q$ discards
torsion obstructions.

\begin{remark}[Every allowable factorization is a lift]\label{rem:braidderived}
The lifting homomorphism for a simple three-fold cover of the disk is
surjective \cite{MontesinosMorton}. See also the introduction of
\cite{Zuddas}.  Let $c$ be any nonseparating simple
closed curve in $P$, and fix a liftable arc $a$.  The mapping class
group acts transitively on nonseparating curves, so choose
$f\in\Mod(P\operatorname{rel}\partial P)$ with $f(\gamma_a)=c$, and then
$b\in L_p$ with $\ell_p(b)=f$.  The arc $b(a)$ is liftable and its lifted
curve is isotopic to $c$.  Since $P$ has one boundary component, every
allowable vanishing cycle is nonseparating.  Applying this construction
to each cycle realizes any positive allowable factorization on $P$ as
the factorwise lift of a quasipositive braid, preserving the ordered
list of vanishing cycles.  The criterion below therefore applies to all
such factorizations, including both sides of
Theorem~\ref{thm:notminimal}.
\end{remark}

\subsection*{A framing supplied by the cover}
Let $S=p^{-1}(\Delta)$, let $\mathcal R\subset S$ be the ramification
points, and put $U=P\setminus S$.  Lift a constant vector field on the disk horizontally
to a nonvanishing field $X$ on $U$.  Near a ramification point, in a
coordinate with $p(w)=w^2$, it is $(2w)^{-1}\partial_w$ and has index
$-1$.  It extends without zeros across the unramified points of $S$.

For a liftable arc $a$, take the boundary of its small neighborhood in
the base.  Either annular boundary component over it is isotopic to
$\gamma_a$ in $P$ and maps diffeomorphically to that boundary.  Choose
one and orient its projection positively.  On this representative,
\begin{equation}\label{eq:rotation}
                         \rot_X\gamma_a=1,
\end{equation}
because the tangent turns once relative to the constant field
downstairs.  The singularities of $X$ are the price of this planar
normalization.

\begin{theorem}[Integral residue criterion]\label{thm:cy}
Suppose the vanishing cycles are lifts of liftable arcs, represented as
in \eqref{eq:rotation}.  Then $W_\lambda$ is CY if and only if there is
$w\in H^1(U;\Z)$ such that
\begin{equation}\label{eq:residues}
 w(\mu_q)=
 \begin{cases}1&q\in\mathcal R,\\0&q\in S\setminus\mathcal R,\end{cases}
       \qquad w([\gamma_j])=1\quad(1\le j\le M),
\end{equation}
where $\mu_q$ is a positively oriented meridian of $q$.
\end{theorem}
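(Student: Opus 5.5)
The plan is to translate the Gompf criterion \eqref{eq:gompf}, which needs a global framing $v$ of $TP$, into a statement about the singular framing $X$ on $U$. Fix any global nonvanishing field $v$ on $P$; one exists because $P$ has nonempty boundary. On $U$ both $v$ and $X$ are nonvanishing. The winding of $X$ relative to $v$ around loops in $U$ then defines a class $\delta\in H^1(U;\Z)$, with $\delta(c)$ the number of turns of $X$ measured against $v$ along $c$.

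For any immersed closed curve $c\subset U$ the two rotation numbers differ by this class,
\[
\rot_v c=\rot_X c+\delta([c]).
\]
We evaluate $\delta$ on meridians by local index computations. At a ramification point $X$ has index $-1$ and $v$ has index $0$. At an unramified point of $S$, $X$ extends without zeros, so the index is $0$. Up to the sign convention fixed by the orientation of $\mu_q$, this gives $\delta(\mu_q)=-1$ for $q\in\mathcal R$ and $\delta(\mu_q)=0$ for $q\in S\setminus\mathcal R$. I will fix the sign of $\delta$ so that $w:=\delta+\iota^*u$ matches \eqref{eq:residues}; this may require replacing $\delta$ by $-\delta$. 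Combined with \eqref{eq:rotation}, the lifted vanishing cycles satisfy $\rot_v\gamma_j=1+\delta([\gamma_j])$, with signs arranged consistently.

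For the forward direction, suppose $W_\lambda$ is CY, so \eqref{eq:gompf} gives $u\in H^1(P;\Z)$ with $u([\gamma_j])=\rot_v\gamma_j$. Set $w=\iota^*u-\delta$, where $\iota:U\to P$ is the inclusion. Restrictions from $P$ vanish on meridians $\mu_q$, since these bound disks in $P$. Hence $w(\mu_q)$ equals the prescribed residue, and $w([\gamma_j])=\rot_v\gamma_j-\delta([\gamma_j])=\rot_X\gamma_j=1$.

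For the converse, let $w$ satisfy \eqref{eq:residues}. The class $w+\delta$ vanishes on every meridian. The exact sequence $H^1(P)\to H^1(U)\to\bigoplus_q\Z$ (restriction to meridians, i.e. the Mayer--Vietoris or Gysin sequence for removing the finite set $S$) then shows $w+\delta=\iota^*u$ for some $u\in H^1(P;\Z)$. Evaluating on $\gamma_j$, which lies in $U$, gives $u([\gamma_j])=\rot_v\gamma_j$, so $W_\lambda$ is CY by \eqref{eq:gompf}.

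The main obstacle is the bookkeeping of signs and orientations. Three conventions have to agree: the index $-1$ of $X$, the orientation of $\mu_q$, and the orientation of the chosen boundary representative of $\gamma_a$ in \eqref{eq:rotation}. In particular, $\rot$ must be evaluated on the specific embedded representative in $U$, and that value is the one Gompf's formula uses for the isotopy class. I would first check all three conventions in the local model $p(w)=w^2$. There a small loop around the ramification point downstairs lifts to a double loop, so the relative turning of $X$ against $v$ along $\mu_q$ can be computed directly.

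The exactness step is routine, because $P\setminus U$ is finite and $H^2$ of a punctured neighbourhood vanishes. Surjectivity onto $\bigoplus\Z$ is not needed; only the kernel statement is used.
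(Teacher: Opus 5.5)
Your proposal is correct and matches the paper's proof step for step. Your $\delta$ is the paper's $\xi=[X/v]$, the forward direction takes $w=\iota^*u-\delta$, and the converse uses exactness of $H^1(P;\Z)\to H^1(U;\Z)\to\Z^S$ at the middle term. The sign hedge is unnecessary, since for positively oriented meridians the index computation gives $\delta(\mu_q)=-1$ directly, so your $w=\iota^*u-\delta$ works as written, and the earlier $w:=\delta+\iota^*u$ is just a slip.
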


\begin{proof}
Choose a global framing $v$ and write $\xi=[X/v]\in H^1(U;\Z)$, so that
$\rot_X\gamma=\rot_v\gamma-\xi([\gamma])$ for every loop in $U$.  The
local indices give $\xi(\mu_q)=-1$ at ramification points and zero at
the others.  If $u$ solves \eqref{eq:gompf}, then $w=i^*u-\xi$, for
$i:U\hookrightarrow P$, has the required residues and evaluates to one
on each vanishing cycle by \eqref{eq:rotation}.

Conversely, evaluation on meridians gives the exact sequence
\[
0\longrightarrow H^1(P;\Z)\xrightarrow{i^*}H^1(U;\Z)
 \longrightarrow\Z^S\longrightarrow H^2(P;\Z)=0,
\]
so a $w$ satisfying \eqref{eq:residues} has $w+\xi=i^*u$ with
$u([\gamma_j])=1+\xi([\gamma_j])=\rot_v\gamma_j$ for every $j$.
\end{proof}

The criterion becomes a finite integer system.  The unbranched cover
retracts to its Schreier graph, whose directed edges $e_{a,i}$ lift the
$i$-th meridian from sheet $a$.  Represent $w$ by an integral edge
cochain.  Order the two interchanged sheet labels as $a_i<b_i$ and let
$c_i$ be the fixed sheet label.  The residue equations take the form
\begin{equation}\label{eq:coordinates}
 w(e_{c_i,i})=0,\qquad
 w(e_{a_i,i})=s_i,\qquad w(e_{b_i,i})=1-s_i.
\end{equation}
Evaluating on the lifted arc chains gives a finite system of linear
equations in the $s_i$ with integer coefficients, and integer
solvability is decided by its Smith normal form.  For a standard pair with
equal transpositions the equation is simply $s_i=s_{i+1}$.

\begin{corollary}[No isotropic triple]\label{cor:isotropic}
Suppose $W_\lambda$ is CY and three vanishing cycles of $\lambda$ have
classes $x$, $y$ and $x+y$ in $H_1(P;\F_2)$.  Then $x\cdot y=1$.
In particular, the sum of the classes of two disjoint vanishing cycles
cannot be the class of a third vanishing cycle in the same factorization.
\end{corollary}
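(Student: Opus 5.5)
The plan is to reduce the integral criterion \eqref{eq:gompf} modulo $2$ and compare it with the quadratic form that the framing $v$ defines on $H_1(P;\F_2)$. Nothing about the cover is needed, so I would work directly from \eqref{eq:gompf} rather than from Theorem~\ref{thm:cy}.

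First, recall Johnson's construction. A nonvanishing vector field $v$ on $P$ defines a spin structure. The function
\[
q_v(\gamma)=\rot_v\gamma+1 \pmod 2,
\]
evaluated on embedded oriented simple closed curves, depends only on the class $[\gamma]\in H_1(P;\F_2)$. It is also independent of orientation, since reversing $\gamma$ negates the rotation number. Extended in the standard way, it is a quadratic refinement of the intersection form:
\[
q_v(x+y)=q_v(x)+q_v(y)+x\cdot y .
\]
I would cite Johnson's paper on spin structures and quadratic forms on surfaces for this. The only care needed is to check that our convention for $\rot_v$ matches his up to sign, and a sign is irrelevant modulo $2$.

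Second, use the hypothesis that $W_\lambda$ is CY. By \eqref{eq:gompf} there is $u\in H^1(P;\Z)$ with $u([\gamma_j])=\rot_v\gamma_j$ for every $j$. Let $\bar u\in\Hom(H_1(P;\F_2),\F_2)$ be the reduction of $u$. Then every vanishing cycle $\gamma_j$ satisfies
\[
q_v([\gamma_j])=\bar u([\gamma_j])+1 .
\]
So on the classes of vanishing cycles, $q_v$ agrees with the affine function $\bar u+1$.

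Third, apply this to the three vanishing cycles with classes $x$, $y$, $x+y$. These classes are nonzero by allowability, which is not even needed. Using the quadratic identity and linearity of $\bar u$ gives
\[
x\cdot y=q_v(x+y)-q_v(x)-q_v(y)=(\bar u(x+y)+1)-(\bar u(x)+1)-(\bar u(y)+1)=-1=1 .
\]
For the final sentence of the statement: two disjoint vanishing cycles have algebraic intersection $0$, hence $x\cdot y=0$. So the sum of their classes cannot be the mod-$2$ class of a third vanishing cycle. Since integral equality implies mod-$2$ equality, the integral version is excluded as well.

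The main obstacle is the first step: making sure the mod-$2$ rotation number of a simple closed curve is a genuine quadratic refinement with respect to the specific $\rot_v$ in Gompf's formula. If I preferred to avoid citing Johnson, I would prove it directly. Take simple representatives of $x$ and $y$ meeting transversally. Resolve the intersection points to obtain a multicurve representing $x+y$. Each smoothing changes the total rotation by $\pm\frac12$ per corner pair, and each extra component contributes the $+1$ shift. This recovers the $x\cdot y$ term modulo $2$.
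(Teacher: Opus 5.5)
Your proof is correct and is essentially the paper's argument. The paper changes the framing to $v'$ with $[v'/v]=u$, so that $\rot_{v'}\gamma_j=0$ and $q_{v'}=1$ on every vanishing cycle; this is the same as your subtraction of $\bar u$, since $q_{v'}=q_v+\bar u$. The paper also makes explicit, and you should too when citing Johnson, that $P$ has a single boundary component: the rotation number of $\partial P$ is $\pm\chi(P)$, which is odd, so the spin structure of $v$ extends over the capping disk and Johnson's closed-surface theorem applies through $H_1(P;\F_2)\cong H_1(\Sigma_g;\F_2)$.
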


\begin{proof}
The spin structure induced by a global framing $v$ extends over the disk
capping the unique boundary of $P$, and under the resulting isomorphism
$H_1(P;\F_2)\cong H_1(\Sigma_g;\F_2)$ Johnson's quadratic refinement
reads $q_v([c])=1+\rot_v(c)$ modulo two for embedded simple closed
curves $c$ \cite{Johnson}.  If $u$ solves \eqref{eq:gompf}, a framing
$v'$ with $[v'/v]=u$ has
$\rot_{v'}(\gamma_j)=\rot_v(\gamma_j)-u([\gamma_j])=0$, so
$q_{v'}([\gamma_j])=1$ for every vanishing cycle.  The identity
$q_{v'}(x+y)=q_{v'}(x)+q_{v'}(y)+x\cdot y$ now gives $1=1+1+x\cdot y$.
\end{proof}

A factorization with a single nonseparating vanishing cycle is always
CY, because its primitive homology class permits any prescribed integral
evaluation in \eqref{eq:gompf}.

\subsection*{What the condition contributes to cutting}
Let $\beta\in L_p$ have a specified factorization into positive liftable
half-twists.  A proper arc $\delta$ in the base, avoiding $\Delta$, will be
called an \emph{admissible cut for this factorization} when the two
blocks, the components of $D^2\setminus\delta$, both contain branch
points, $\beta$ is represented by the identity near
$\delta$, each half-twist is supported in one block, and the lifted
gluing graph is a tree.  The graph has one vertex per component of the
preimages of the blocks and one edge per component of $p^{-1}(\delta)$.
These assumptions specify a cut of the open book and its given filling,
rather than just a cut of the page.

\begin{proposition}[A Calabi--Yau obstruction to cutting]\label{prop:cut}
An admissible cut has two blocks of odd size, each \emph{monochromatic} in
the sense that all its transpositions agree.  It realizes
the page as a boundary connected sum, and the contact manifold as the
contact connected sum of the pieces.  Its localized factorization is CY.
Consequently a factorization which is not CY admits no such cut.
\end{proposition}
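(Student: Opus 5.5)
The plan is to read off the combinatorics of the cut from the three sheets over $\delta$, and then to solve the residue system \eqref{eq:residues} of Theorem~\ref{thm:cy} block by block. Since $\delta$ is a proper arc avoiding $\Delta$, $p^{-1}(\delta)$ consists of three arcs, so the lifted gluing graph has exactly three edges. Being a tree, it has exactly four vertices. Choose a Hurwitz system adapted to $\delta$: the first $k$ points lie in one block $B_1$, the last $m=n-k$ in the other block $B_2$, and each block's meridians generate its local monodromy group. For a block whose transpositions generate $S_3$ the preimage is connected, giving one vertex. If all its transpositions equal a single $t$, the preimage is a double branched cover on the two sheets moved by $t$, together with a disk on the fixed sheet, giving two vertices. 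Four vertices therefore force both blocks to be monochromatic, with colours $t_1$ and $t_2$ say. The block products are then $t_1^{k}$ and $t_2^{m}$, and their product must be the $3$-cycle $C$. This is impossible if either power is trivial or if $t_1=t_2$, so $k$ and $m$ are odd and $t_1\neq t_2$.

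Next I identify the pieces. Over $B_1$ the preimage is a disk $D$ together with $F_1$, the double cover of $B_1$ branched at $k$ points, which has one boundary circle and genus $(k-1)/2$. Likewise $F_2$ has genus $(m-1)/2$, and $(k-1)/2+(m-1)/2=g$. Because $t_1\ne t_2$, the fixed sheet of each block lies in the double-cover part of the other block. Hence along $p^{-1}(\delta)$ each of the two disks is glued to the other block's surface, and $F_1$ is glued to $F_2$ along one arc. Absorbing the disks, $P\cong F_1\natural F_2$. By admissibility $\beta$ is the identity near $\delta$ and every half-twist is supported in one block. The factorization thus splits as a product of a factorization supported in $F_1$ and one supported in $F_2$, and the lifted monodromy is the boundary-connected sum of the two monodromies. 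The standard fact that a boundary sum of open books (a plumbing along an arc) supports the contact connected sum then gives the second claim. The Stein filling $W_\lambda$ is the corresponding boundary sum of the two PALF fillings.

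For the CY statement I use \eqref{eq:coordinates}. Since each block is monochromatic, every arc inside it is liftable: transport only conjugates $t_i$ by $t_i$. I propose to take $s$ constant on each block, say $s\equiv 0$, independently on $B_1$ and on $B_2$. The fixed-sheet edges get $0$ automatically. Because $U$ retracts to the Schreier graph, any edge cochain defines a class in $H^1(U;\Z)$, so the residue equations are satisfied.

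The main obstacle is verifying $w([\gamma_j])=1$ for an \emph{arbitrary} liftable arc in a block, not just a standard adjacent one. For standard pairs the paper's reduction gives the equation $s_i=s_{i+1}$, which a constant $s$ satisfies. For a general arc I would write it as $b(a_{\mathrm{std}})$ with $b$ a braid supported in the block. Hurwitz moves inside a monochromatic block fix the tuple, so the edge labels are permuted sheetwise, and a constant-$s$ cochain is invariant under $\ell_p(b)$ up to the planar normalization \eqref{eq:rotation}. Failing that, I would compute directly: the lift of $\partial D_a$ alternates between the two sheets of $t_i$, so it traverses one meridian edge from sheet $a_i$ and one from sheet $b_i$. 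The conjugating letters cancel in pairs because they appear as $e\,e^{-1}$ on the same edges, which gives $s+(1-s)=1$. With this in hand Theorem~\ref{thm:cy} yields CY, and the final sentence of the proposition is the contrapositive.
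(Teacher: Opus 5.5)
Your combinatorial step (three edges and four vertices, so both blocks are monochromatic of odd size with $t_1\ne t_2$), the identification $P\cong F_1\natural F_2$, and your cochain all agree with the paper's proof. With the interchanged sheets of a block labelled $0,1$ (your $a_i<b_i$), $s\equiv0$ is exactly the paper's choice $w(e_{0,k})=0$, $w(e_{1,k})=1$, $w(e_{2,k})=0$. Using one cochain on the Schreier graph of a Hurwitz system adapted to $\delta$ is a harmless substitute for the paper's componentwise use of Theorem~\ref{thm:cy} together with the splitting of $H^1(P;\Z)$ along the tree.

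The gap is the step you flag yourself, $w([\gamma_j])=1$ for a non-standard arc, and the direct computation you offer is wrong. Up to conjugacy, write the boundary loop of an arc from $q_i$ to $q_j$ in its block as $\mu_i\,z\mu_jz^{-1}$. Every letter of the block swaps sheets $0$ and $1$, so the lift starting on $x\in\{0,1\}$ runs $z$ from $1-x$, and then runs $z^{-1}$ back along the lift of $z$ from $x$. These two lifts lie on opposite sheets at every step, so the conjugating letters never occur as $e\,e^{-1}$ on the same edge, and their values cancel only when $z$ has even length. Take $z=\mu_l$, which already occurs for the image of a standard arc under one Artin generator. The lift is then $e_{x,i}\,e_{1-x,l}\,e_{x,j}\,\bar e_{x,l}$. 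Both meridian edges start on sheet $x$, so the bookkeeping $s+(1-s)$ does not apply. For unconstrained $s$ and $x=0$ the value is $1+s_i+s_j-2s_l$. This is still $1$ for constant $s$, but not for the reason you give.

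Your invariance route could be made to work, but ``edge labels are permuted sheetwise'' is not accurate. In one convention an Artin generator sends $e_{y,i}$ to the path $e_{y,i}\,e_{1-y,i+1}\,\bar e_{1-y,i}$. You would therefore have to check $\tilde\sigma^*w=w$ on the generators of the block braid group; as written this is only asserted, and the qualifier ``up to the planar normalization'' is left unexplained.

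The paper closes this step with a counting argument that needs no normal form for the arc. The lift of $\partial D_a$ from either interchanged sheet is a closed path on the two vertices $0,1$. Each edge $e_{0,k}$ runs from $0$ to $1$, and each $e_{1,k}$ runs from $1$ to $0$. Let $N_0,N_1$ be the signed numbers of traversals of these two kinds of edges. Closedness gives $N_0=N_1$, and $N_0+N_1$ is the exponent sum of $\partial D_a$, namely $2$. Hence $w(\gamma)=0\cdot N_0+1\cdot N_1=1$ for every arc in the block and for either annular lift. With this replacing your last paragraph, the proof is complete.
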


\begin{proof}
A nonempty block has two preimage components exactly when all its
transpositions agree, the preimage being a connected double cover
together with a disk, and one otherwise.  The lifted cut has three
components, so the connected gluing graph has three edges and is a tree
exactly when the two component counts sum to four.  Both blocks are
therefore monochromatic, with products $x^r$ and $y^s$ satisfying
$x^ry^s=C$, which forces $r,s$ odd and $x\ne y$.  The pieces glue along
the tree by boundary connected sums of pages, and each half-twist is
supported in one block, so the monodromy is a product of factors
supported in the two sides and is the identity near the gluing arcs.
This gives the contact connected sum.

On a monochromatic block label the interchanged sheets $0,1$ and the
fixed sheet $2$, and set $w(e_{0,i})=0$, $w(e_{1,i})=1$ and
$w(e_{2,i})=0$.  The residues are as required.  A lifted arc chain is a
cycle on the two interchanged sheets whose projected word has total
exponent two, so it is traversed once from each of them and
$w(\gamma)=1$.  Theorem~\ref{thm:cy} now applies componentwise to the
restricted cover, and the tree splits $H^1(P;\Z)$ as the direct sum of
the cohomologies of the pieces, so the solutions combine.
\end{proof}

Degree three also limits the number of simultaneous cuts.

\begin{proposition}[One cut at a time]\label{prop:onecut}
Let $\delta_1,\ldots,\delta_m$ be disjoint proper arcs in the base
avoiding $\Delta$, dividing $D^2$ into $m+1$ blocks each containing a branch
point, and suppose the lifted gluing graph is a tree.  Then $m\le1$.
\end{proposition}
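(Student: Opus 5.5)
The plan is to count vertices and edges of the lifted gluing graph, as in the proof of Proposition~\ref{prop:cut}. Each arc $\delta_j$ avoids $\Delta$, so $p^{-1}(\delta_j)$ consists of exactly three arcs. The graph therefore has $3m$ edges. Its vertices are the components of the preimages of the $m+1$ blocks. From the proof of Proposition~\ref{prop:cut}, a block containing at least one branch point has preimage with two components when all its transpositions agree, and one component otherwise. Write $N_k\in\{1,2\}$ for the number of components over the $k$-th block.

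The tree condition says that the number of vertices exceeds the number of edges by one:
\[
 \sum_{k=1}^{m+1}N_k=3m+1 .
\]
Since $N_k\le2$, the left side is at most $2m+2$. This gives $3m+1\le 2m+2$, hence $m\le1$.

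One point needs care: the graph must be built with one edge per component of $p^{-1}(\delta_j)$, connecting the preimage components of the two adjacent blocks. The base blocks are arranged as a tree, the dual graph of disjoint arcs in a disk. Each lifted arc lies on the boundary of exactly one component on each side, so it contributes exactly one edge. The main thing to verify is the count $N_k\le2$ for every block, including interior blocks bounded by several cuts.

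For that count, the preimage of a block $R$ containing branch points is governed by the monodromy group generated by its transpositions. Regions bounded by several $\delta_j$ are still disks, so the monodromy group of $R$ is generated by the transpositions of its branch points. A group generated by at least one transposition in $S_3$ has at most two orbits. Hence $N_k\le2$, and the hypothesis that every block contains a branch point is exactly what is needed. Connectedness of the graph is automatic from the tree hypothesis, and no other input is required.
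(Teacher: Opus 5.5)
Your proposal is correct and follows the paper's proof exactly. The graph has $3m$ edges and $\sum c_i\le 2(m+1)$ vertices, and the tree identity $V=E+1$ then forces $3m+1\le 2m+2$. Your justification of $c_i\le 2$ (a subgroup of $S_3$ containing a transposition has at most two orbits on the three sheets) makes explicit what the paper takes from the proof of Proposition~\ref{prop:cut}.
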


\begin{proof}
The preimage of each $\delta_i$ has three components, so the graph has
$3m$ edges, and it has $\sum_ic_i$ vertices, where $c_i$ is the number of
preimage components of the $i$-th block.  A tree satisfies
$\sum_ic_i=3m+1$.  As in the proof of Proposition~\ref{prop:cut} a
nonempty block has $c_i\le2$, so $3m+1\le2(m+1)$ and $m\le1$.
\end{proof}

In the planar construction, several simultaneous cuts give sectorwise
tensor products \cite{PlanarCY} (see also \cite{MeasurementCuts}).

The CY condition is not automatic on a page of positive genus.
In $\Sigma_{2,1}$ embed
$Q\cong\Sigma_{1,2}$ whose complement is a pair of pants, and orient its
boundary curves $d,e$, both nonseparating in $\Sigma_{2,1}$, so that
$[d]=[e]$.  The boundary rotation formula
gives $\rot_v(d)-\rot_v(e)=\chi(Q)=-2$.  Thus the positive allowable
factorization $\tau_d\tau_e$ fails \eqref{eq:gompf} even over $\mathbb Q$.
The obstruction concerns the localization of a factorization rather
than a classification of all surface decompositions.

\subsection*{Calabi--Yau no longer means shortest}
The same subsurface says more.  In the planar construction of
\cite{PlanarCY} a Calabi--Yau positive factorization is shortest among
the positive factorizations of its monodromy.  On a page of positive
genus, the Calabi--Yau condition no longer has this consequence.

\begin{theorem}[CY factorizations need not be shortest]\label{thm:notminimal}
Let $a,b,c$ be a chain of three curves in the subsurface
$Q\cong\Sigma_{1,2}$ above.  The chain relation
\cite[Proposition~4.12]{FarbMargalit}
\[
                 (\tau_a\tau_b\tau_c)^4=\tau_d\tau_e
\]
holds in $\Mod(\Sigma_{2,1})$.  Both sides are positive allowable
factorizations of the same monodromy.  The left-hand side, of length
twelve, is Calabi--Yau; the right-hand side, of length two, is not.
\end{theorem}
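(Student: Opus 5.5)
The plan has four parts: cite the chain relation, check allowability, show the left side is CY with \eqref{eq:gompf}, and show the right side is not CY using the computation just before the theorem.

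\emph{The relation and allowability.} For a chain $a,b,c$ in $Q\cong\Sigma_{1,2}$, the standard three-chain relation from \cite[Proposition~4.12]{FarbMargalit} gives $(\tau_a\tau_b\tau_c)^4=\tau_d\tau_e$. Here $d,e$ are the two boundary curves of a regular neighborhood of $a\cup b\cup c$, and we arrange this neighborhood to be the subsurface $Q$. The relation holds in $\Mod(Q\operatorname{rel}\partial Q)$, so it persists in $\Mod(\Sigma_{2,1})$ under inclusion. Both factorizations are then positive, and allowability is immediate. The curves $a,b,c$ are nonseparating because each meets another chain curve once. The curves $d,e$ are nonseparating by the choice of embedding. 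Since $P$ has one boundary component, these classes are nonzero in $H_1(P;\Z)$.

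\emph{The left-hand side is CY.} We use \eqref{eq:gompf} and need $u\in H^1(P;\Z)$ with $u([\gamma])=\rot_v\gamma$ for $\gamma\in\{a,b,c\}$. The left factorization uses only these three cycles, with repetition, so there are only three conditions. The classes $[a],[b],[c]$ are linearly independent in $H_1(\Sigma_{2,1};\Z)$. Better still, they span a primitive (saturated) sublattice, because $a$ and $c$ are disjoint nonseparating curves whose union does not separate, and $b$ is dual to $a$. One way to see this is to extend $a,b,c$ to a chain $a,b,c,d',e'$ whose classes form a symplectic-type basis. Alternatively, Poincaré duality gives classes $\alpha,\beta,\gamma'$ with an upper-triangular unimodular intersection matrix against $a,b,c$. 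In either case any prescribed integers on $[a],[b],[c]$ extend to a homomorphism $H_1(P;\Z)\to\Z$, which gives the required $u$. This is the integral version of the remark that a single primitive cycle is always CY. Primitivity is the one point that needs care, and I would verify it by writing $a,b,c$ as standard curves $a_1,b_1$ and a curve $c$ homologous to $a_1-a_2$ in the usual picture of the chain in $\Sigma_{2,1}$.

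\emph{The right-hand side is not CY.} The computation just before the theorem shows this. With orientations chosen so that $[d]=[e]$, any $u$ satisfies $u([d])=u([e])$. Condition \eqref{eq:gompf} would then force $\rot_v d=\rot_v e$. But the boundary rotation formula on $Q$ gives $\rot_v d-\rot_v e=\chi(Q)=-2\ne0$. Orientation conventions need checking, because Dehn twists do not depend on curve orientation. Reversing a curve negates both $\rot_v$ and the value of $u$, so the CY condition is orientation independent and we may fix the orientations with $[d]=[e]$.

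\emph{Lengths and the main obstacle.} The lengths are twelve and two by inspection. The main obstacle is the sign and orientation bookkeeping in the rotation formula: one orientation gives a difference of $\pm2$, while with the other, $[d]=-[e]$ and the rotations sum to $-\chi$ up to sign. Either way the difference is nonzero, so the conclusion is robust.
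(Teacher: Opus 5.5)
Your proposal is correct and follows the paper's proof. Allowability comes from nonseparation. The left side is CY because $[a],[b],[c]$ span a direct summand of $H_1(\Sigma_{2,1};\Z)$, so a single $u$ meets all twelve conditions. The right side fails because $[d]=[e]$ while $\rot_v d-\rot_v e=\chi(Q)=-2$.

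The only difference is how the direct-summand property is obtained. The paper gets it intrinsically from Mayer--Vietoris on $\Sigma_{2,1}=Q\cup_{d\sqcup e}P'$: the map $H_1(Q;\Z)\to H_1(\Sigma_{2,1};\Z)$ is injective with cokernel $\widetilde H_0(d\sqcup e)\cong\Z$. You instead use standard coordinates. In doing so, note that five chain classes cannot form a basis of the rank-four group $H_1(\Sigma_{2,1};\Z)$, though the first four do. Your argument via $a\cup c$ nonseparating with $b$ dual to $a$ already suffices on its own.
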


\begin{proof}
All five curves are nonseparating in $\Sigma_{2,1}$, so both
factorizations are allowable.  The regular neighborhood $Q$ of
$a\cup b\cup c$ retracts onto that graph, so $[a],[b],[c]$ form a basis
of $H_1(Q;\Z)$.

Write $\Sigma_{2,1}=Q\cup_{d\sqcup e}P'$ with $P'$ the complementary pair
of pants.  Since $H_1(P';\Z)$ is generated by $[d]$ and $[e]$, which are
classes of curves lying in $Q$ as well, Mayer--Vietoris makes
$H_1(Q;\Z)\to H_1(\Sigma_{2,1};\Z)$ injective with cokernel
$\widetilde H_0(d\sqcup e)\cong\Z$.  That cokernel is torsion-free, so
$H_1(Q;\Z)$ is a direct summand and the basis $[a],[b],[c]$ extends to
one of $H_1(\Sigma_{2,1};\Z)$.

Hence for a global framing $v$ there is $u\in H^1(\Sigma_{2,1};\Z)$ with
$u([a])=\rot_v a$, $u([b])=\rot_v b$ and $u([c])=\rot_v c$.  The
vanishing cycles of $(\tau_a\tau_b\tau_c)^4$ are $a,b,c$, each repeated
four times, so this single $u$ satisfies all twelve conditions of
\eqref{eq:gompf}.  That $\tau_d\tau_e$ fails them is the computation
above.
\end{proof}

The two fillings have different topology.  For the long factorization
$M=12$ and the classes of the vanishing cycles have rank three, so
$b_1=1$ and $b_2=9$. For the short one $M=2$ and the rank is one, so
$b_1=3$ and $b_2=1$.  One contact manifold therefore carries both a
Calabi--Yau and a non-Calabi--Yau Stein filling, with the longer
factorization giving the Calabi--Yau filling.

\begin{remark}[The planar lantern]\label{rem:lantern}
The planar lantern relation on $\Sigma_{0,4}$ equates positive allowable
factorizations of lengths four and three \cite{FarbMargalit}, and
neither side is CY.  To check this, orient every lantern curve
counterclockwise and use the planar framing and winding-number basis
$e_j=-[\partial_j\Sigma_{0,4}]$ of \cite{PlanarCY}, where
$\partial_1,\partial_2,\partial_3$ are the inner boundaries with their
boundary orientations, so that the four boundary-parallel classes are
$e_1,e_2,e_3,e_1+e_2+e_3$.  Writing $u_i=u(e_i)$, the three interior
curves have classes $e_1+e_2$, $e_1+e_3$, $e_2+e_3$ and rotation number
one.  Equation~\eqref{eq:gompf} requires $u_i+u_j=1$ for $i<j$,
whose only rational solution is $u_1=u_2=u_3=\tfrac12$.  The resulting
$c_1$ is a nonzero class of order two.  For the four boundary-parallel
curves all rotation numbers are one, so the conditions are
$u_1=u_2=u_3=1$ together with $u_1+u_2+u_3=1$.

The interior side illustrates why the CY test must be integral.
Theorem~\ref{thm:notminimal} exhibits the additional positive-genus
phenomenon that a CY factorization can be longer than a non-CY factorization
of the same monodromy.
\end{remark}

\section{The channel left by a maximal pair system}\label{sec:comparison}

We now keep the cover, $\mathcal M$ and $a_0$ fixed, and ask what the two
standard unitary braid representations on these $2g+2$ points, imported
rather than read off from the cover, see of the remaining pair.  We use
their fusion spaces and the projections onto the fusion channels of the
pairs, attached to the \emph{arcs}.  A
change of coordinates taking the disjoint arcs $a_0,a_1,\ldots,a_g$ to standard pairs identifies these projections
with the usual pair projections by unitary braid conjugation.

For Ising, write $x$ for the non-Abelian label, to distinguish it from
a braid generator, with rules
\[
 x\otimes x=\one\oplus\psi,\qquad
 \psi\otimes\psi=\one,\qquad \psi\otimes x=x.
\]
The state space is $\cH_n^{\mathrm I}(c)=\Hom(c,x^{\otimes n})$, with total charge
$c\in\{\one,\psi\}$ \cite{NSSFD}.  For $n=2g+2$ these are the two
parity sectors of \cite{PlanarCY}, labelled here by the total charge.
For Fibonacci the rules and spaces are
\[
 \tau\otimes\tau=\one\oplus\tau,\qquad
 \cH_n^{\mathrm F}(c)=\Hom(c,\tau^{\otimes n}),\qquad c\in\{\one,\tau\}.
\]
We set $\cH_0^{\mathrm F}(\one)=\C$ and $\cH_0^{\mathrm F}(\tau)=0$. For $n\ge1$,
\begin{equation}\label{eq:fibdim}
            \dim \cH_n^{\mathrm F}(\one)=F_{n-1},\qquad \dim \cH_n^{\mathrm F}(\tau)=F_n,
\end{equation}
where $F_0=0,F_1=1$.  These are the usual fusion rules and braid
representations \cite{TTWL}. The total charge is fixed independently of
the covering geometry.  We write $R$ for the unitary braid representation
in the chosen model and total-charge sector, and $\rho=[R]$ for its
projectivization.  In Ising, $\rho$ is the projective representation used
in \cite{PlanarCY}.  The chosen unitary normalization records the scalar
phases in the formulas below.  Fibonacci braiding is
universal for quantum computation \cite{FLW}, whereas Ising braiding
alone is not \cite{NSSFD}.

The projections for disjoint pair arcs commute, and we denote by $E$ a
joint eigenspace after fixing the channels of the $g$ \emph{liftable}
pairs in $\mathcal M$, the channel of $a_0$ remaining unfixed.  This
distinction between $g$ geometric pairs and all $g+1$ pairs is
essential.

\begin{theorem}[Ising and Fibonacci at the remaining pair]\label{thm:comparison}
For the cover and maximal pair system above:
\begin{enumerate}[label=\textup{(\roman*)}]
\item In Ising, each choice of the $g$ channels gives $\dim E=1$.
Those channels and the total charge determine the remaining channel.
\item In Fibonacci, suppose $m$ of the $g$ fixed channels are $\tau$.
There is a decomposition by the channel of $a_0$,
\begin{equation}\label{eq:E}
 E=E_{\one}\oplus E_\tau
       \cong \cH_m^{\mathrm F}(c)\oplus \cH_{m+1}^{\mathrm F}(c),
\end{equation}
and hence $E\cong \cH_{m+2}^{\mathrm F}(c)$.  Both summands are nonzero precisely when
$m\ge2$ for $c=\one$, and when $m\ge1$ for $c=\tau$.
\item Choose the Fibonacci chirality and scalar normalization with
$r_{\one}=e^{-4\pi i/5}$ and $r_\tau=e^{3\pi i/5}$, the complex
conjugates of \cite[Eq.~(2.7)]{TTWL}.  For $t\in\Z$, the local braids
$h_0^{3t}\in\ker\ell_p$ act on $E$ as
\begin{equation}\label{eq:relative}
 R(h_0^{3t})|_E
       =r_{\one}^{3t}\id_{E_{\one}}\oplus r_\tau^{3t}\id_{E_\tau}.
\end{equation}
When both summands are nonzero, $t=0,\ldots,9$ gives ten distinct
projective operators on this \emph{fixed} joint eigenspace.  In Ising
the same local braids act as scalars on each $E$.
\end{enumerate}
\end{theorem}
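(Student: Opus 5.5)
The plan is to reduce everything to a standard fusion-tree basis and then read each claim off the fusion rules. A braid $\beta$ taking $a_0,a_1,\ldots,a_g$ to standard adjacent pairs exists by the change of coordinates principle, since the arcs are disjoint with disjoint endpoints and together form a complete matching of the $2g+2$ points. Conjugation by $R(\beta)$ identifies the arc projections with the usual pair projections, and it carries $h_0$ to the standard generator $\sigma_{2g+1}$ exchanging the last two points. So I will work with the ordered pairs $(1,2),\ldots,(2g-1,2g),(2g+1,2g+2)$, the last pair being $a_0$. I use a fusion basis that first fuses each pair to a channel $y_i$, then fuses $y_1,\ldots,y_g$ in order to an intermediate charge $z$, and finally fuses $z$ with $y_0$ to $c$. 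This gives an orthogonal decomposition
\[
\cH_{2g+2}(c)=\bigoplus_{y_0,\ldots,y_g}\ \bigoplus_z \Hom(c,z\otimes y_0)\otimes\Hom(z,y_1\otimes\cdots\otimes y_g),
\]
in which the pair projections are diagonal. The operator $R(\sigma_{2g+1})$ acts on each summand by the scalar $r_{y_0}$ for the last pair, by the standard R-move.

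For (i), note that the Ising channels $y_i\in\{\one,\psi\}$ are Abelian. The product $z=y_1\cdots y_g$ is therefore unique, and $y_0$ is forced by $c=z\otimes y_0$, so $y_0=cz$. Each multiplicity space is one-dimensional because every pair space $\Hom(y_i,x\otimes x)$ is a line, hence $\dim E=1$. Since $E$ lies inside a single $y_0$-eigenspace, every $R(h_0^{k})$ acts on $E$ by the scalar $r^{(x)}_{y_0}$ raised to the power $k$. This proves the last sentence of (iii).

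For (ii), fixing $y_1,\ldots,y_g$ with $m$ of them equal to $\tau$ makes the pair spaces one-dimensional. The $\one$ channels can then be dropped, so $\Hom(z,y_1\otimes\cdots\otimes y_g)\cong\Hom(z,\tau^{\otimes m})$. I split $E$ by $y_0$. If $y_0=\one$, then $z=c$, giving $E_\one\cong\cH_m^{\mathrm F}(c)$. If $y_0=\tau$, then $\bigoplus_z\Hom(c,z\otimes\tau)\otimes\cH^{\mathrm F}_m(z)\cong\cH^{\mathrm F}_{m+1}(c)$, by fusing one more $\tau$. Associativity, or simply adding dimensions with \eqref{eq:fibdim}, gives $E\cong\cH^{\mathrm F}_{m+2}(c)$. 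The nonvanishing conditions are read off \eqref{eq:fibdim}: $F_{m-1}>0$ iff $m\ge2$, and $F_m>0$ iff $m\ge1$. For $c=\tau$ we have $\dim\cH_m(\tau)=F_m$, and for $c=\one$ we have $\dim\cH_{m+1}(\one)=F_m$. The convention for $m=0$ needs a separate check against $\cH_0$.

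For (iii), the decomposition above gives $R(h_0)|_E=r_\one\id_{E_\one}\oplus r_\tau\id_{E_\tau}$, and taking the $3t$-th power yields \eqref{eq:relative}. The claim $h_0^{3t}\in\ker\ell_p$ is \eqref{eq:kernel}. Projectively, only the ratio matters:
\[
(r_\tau/r_\one)^{3t}=e^{7\pi i\cdot 3t/5}=e^{21\pi i t/5}.
\]
This has order $10$ in $t$, because $21$ is coprime to $10$. When both summands are nonzero, the operators for $t=0,\ldots,9$ are therefore pairwise projectively distinct.

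I expect the main obstacle to be making the first reduction rigorous, namely that conjugating the standard basis by $R(\beta)$ really diagonalizes the arc-attached projections consistently with the chosen scalar normalization. I will handle this by defining the arc projections as the conjugates, as the paper's preamble to the theorem does.
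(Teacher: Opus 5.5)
Your proposal is correct and follows essentially the same route as the paper. You conjugate to standard pairs with $a_0$ last, use the Abelian Ising channels to force the remaining channel and make $E$ a line, split the Fibonacci space by the channel of the last pair to obtain $\cH_m^{\mathrm F}(c)\oplus\cH_{m+1}^{\mathrm F}(c)$, and read off the order-ten relative phase $(r_\one/r_\tau)^3$. The $m=0$ check you left open is immediate, and it is exactly the convention the paper invokes: for $c=\one$ one has $E_\tau\cong\cH_1^{\mathrm F}(\one)=0$, and for $c=\tau$ one has $E_\one\cong\cH_0^{\mathrm F}(\tau)=0$.
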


\begin{proof}
In Ising each pair channel is $\one$ or $\psi$, both invertible and with
single-valued fusion, so the total-charge condition
$q_0\otimes q_1\otimes\cdots\otimes q_g=c$ determines $q_0$ from the
other $g$ labels.  Every fusion multiplicity is one, so $E$ is a line.

In Fibonacci, fusing the $g$ specified pairs first leaves
$\tau^{\otimes m}$ from the channels equal to $\tau$ and the last two
anyons unfused, so
$E\cong\Hom(c,\tau^{\otimes m}\otimes\tau\otimes\tau)$.  Splitting the
last pair into $\one\oplus\tau$ gives \eqref{eq:E}, and
\eqref{eq:fibdim} with its convention at $m=0$ proves the nonvanishing
assertions.

The half-twist at $a_0$ is the scalar $r_{\one}$ or $r_\tau$ in its two
pair channels, giving \eqref{eq:relative}.  The ratio $r_{\one}/r_\tau$
has order ten, and so does $(r_{\one}/r_\tau)^3=e^{9\pi i/5}$, so the
liftable cubes realize all its powers and the projective order on $E$ is
exactly ten when both summands occur.  In Ising the last channel is
already determined and $E$ is a line, so a local braid acts there by a
scalar.  The kernel assertion is \eqref{eq:kernel}.
\end{proof}

The channel $\tau$ is noninvertible, and even after the remaining
outcome is fixed Fibonacci can retain a multiplicity space,
of dimension $\dim \cH_m^{\mathrm F}(c)$ or $\dim \cH_{m+1}^{\mathrm F}(c)$ in
\eqref{eq:E}, on each summand of which the local braid is scalar.  Its
relative phase distinguishes the two summands without acting
noncommutatively inside either.

\begin{remark}[Dimensions after fixing the liftable channels]\label{rem:count}
There are $2^g$ choices of the $g$ liftable channels.  In Ising they
give $2^g$ lines, and
$\dim \cH_{2g+2}^{\mathrm I}(c)=2^{g}$, so the lines exhaust the state space and the
geometric measurement is complete.  In Fibonacci the same $2^g$ choices
give joint eigenspaces of dimension $\dim \cH_{m+2}^{\mathrm F}(c)$, where $m$ is the
number of channels equal to $\tau$, and
\[
 \sum_{m=0}^{g}\binom{g}{m}\dim \cH_{m+2}^{\mathrm F}(c)=\dim \cH_{2g+2}^{\mathrm F}(c),
\]
that is $\sum_m\binom gm F_{m+2}=F_{2g+2}$ for $c=\tau$ and
$\sum_m\binom gm F_{m+1}=F_{2g+1}$ for $c=\one$.  The arithmetic mean
of the joint eigenspace dimensions is therefore $F_{2g+1}/2^g$ in the
vacuum sector and $F_{2g+2}/2^g$ in the $\tau$ sector.  Both increase
with $g\ge1$, whereas the Ising mean remains one.
\end{remark}

For comparison with cutting, a block decomposition into $r+s$ points has
the standard fusion-space form
\[
 \cH_{r+s}^{\mathrm F}(c)\cong\bigoplus_{a,b}
 \cH_r^{\mathrm F}(a)\otimes \cH_s^{\mathrm F}(b)\otimes\Hom(c,a\otimes b).
\]
If the braid is supported in the two blocks, it acts on each summand by
the tensor product of the block actions and the identity on the last
factor.  This supplies the quantum decomposition for an admissible cut
of Proposition~\ref{prop:cut}.  Odd block sizes do not fix the labels
$a,b$.  The total charge and the intermediate channels remain part of
the quantum data.

\section{A fixed Calabi--Yau realization}\label{sec:realization}

The preceding section concerns quantum spaces attached to a geometric
pair system.  We next check that its relative phase is
available through quasipositive braids whose liftable factorizations all
realize a single Calabi--Yau filling.

\begin{theorem}[Relative phases at fixed filling]\label{thm:fixed}
Let $k_1,\ldots,k_g$ be positive integers and $t$ a nonnegative integer,
and put
\begin{equation}\label{eq:beta}
                 \beta_t=h_1^{k_1}\cdots h_g^{k_g}h_0^{3t}.
\end{equation}
These are quasipositive braids in $L_p$.  Lifting the factors $h_i$ for
$i>0$ and $h_0^3$, and omitting the identity lifts, gives the same
positive allowable factorization for every $t$:
\[
                  \tau_{\gamma_1}^{k_1}\cdots\tau_{\gamma_g}^{k_g}.
\]
The Stein filling $W$ determined by this factorization is therefore the
same for every $t$, up to Stein deformation.  This filling is CY, with
\begin{equation}\label{eq:cost}
            b_1(W)=g,\qquad b_2(W)=\sum_{i=1}^g(k_i-1).
\end{equation}
On any fixed Fibonacci channel space $E$ of
Theorem~\ref{thm:comparison} in which both remaining channels occur,
the operators $R(\beta_t)|_E$, $0\le t\le9$, are pairwise distinct
projectively.  On each fixed Ising channel space they are projectively
the same.
\end{theorem}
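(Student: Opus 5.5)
The proof splits into three parts: liftability and the lifted factorization, the CY condition with the Betti numbers, and the quantum comparison.

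\emph{Liftability and the lifted factorization.} Each $h_i$ with $i\ge1$ is a positive half-twist about a liftable arc, so it lies in $L_p$ by Lemma~\ref{lem:local}(i). Likewise $h_0^3\in L_p$ by \eqref{eq:kernel}. Hence $\beta_t\in L_p$, and it is quasipositive as a product of positive half-twists. Lifting factorwise sends $h_i\mapsto\tau_{\gamma_i}$ by Lemma~\ref{lem:local}(i), and sends each block $h_0^3\mapsto 1$ by Lemma~\ref{lem:local}(ii). After omitting identities, the resulting word is $\tau_{\gamma_1}^{k_1}\cdots\tau_{\gamma_g}^{k_g}$, independent of $t$. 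The curves $\gamma_i$ are nonseparating, since their classes are independent over $\F_2$ by Theorem~\ref{thm:pairs}. So the factorization is allowable, and by \cite{LoiPiergallini} the PALF, and hence the Stein filling, is the same for all $t$.

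\emph{The CY condition and the Betti numbers.} By Theorem~\ref{thm:pairs}, the classes $[\gamma_1],\dots,[\gamma_g]$ are independent over $\F_2$. I then want them to extend to a $\Z$-basis of $H_1(P;\Z)\cong\Z^{2g}$. Independence mod $2$ alone does not give primitivity of the span. I would argue directly that the $\gamma_i$ are disjoint nonseparating curves whose union does not separate $P$; this is the connectedness of the cut surface shown in the proof of Theorem~\ref{thm:pairs}. By the change of coordinates principle they are then part of a standard geometric symplectic basis, so their classes span a direct summand. Given this, I choose $u\in H^1(P;\Z)$ with $u([\gamma_i])=\rot_v\gamma_i$. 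Repeated cycles impose the same condition, so \eqref{eq:gompf} is satisfied and $W$ is CY. Alternatively, Theorem~\ref{thm:cy} applies: setting $s_i=s_{i+1}$ on each standard pair, for instance via the monochromatic-block cochain used in Proposition~\ref{prop:cut}, solves \eqref{eq:residues}.

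For the Betti numbers, $W$ is $P\times D^2$ with $M=\sum k_i$ two-handles attached along the $\gamma_j$. Then $H_1(W)$ is the cokernel of $\Z^M\to H_1(P)$, whose image is the rank-$g$ summand, so $b_1=2g-g=g$. The kernel has rank $M-g$, which gives $b_2=\sum(k_i-1)$.

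\emph{Quantum comparison.} Disjoint supports make the factors commute. Since $\beta_t=\beta_0h_0^{3t}$, we get $R(\beta_t)=R(\beta_0)R(h_0^{3t})$. Each $R(h_i)$ preserves every pair projection, because the arcs are disjoint: it acts as a scalar on the pair channel of $a_i$ and commutes with the other projections. So $R(\beta_0)$ preserves $E$ and acts on it as a scalar $\lambda$ depending only on the fixed channels. Hence $R(\beta_t)|_E=\lambda\,R(h_0^{3t})|_E$.

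By Theorem~\ref{thm:comparison}(iii), when both channels of $a_0$ occur these operators are pairwise projectively distinct for $0\le t\le9$, because $(r_\one/r_\tau)^3$ has order ten. In Ising, $E$ is a line by Theorem~\ref{thm:comparison}(i), so every operator on it is projectively trivial, and the operators coincide projectively.

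The main obstacle I expect is the integrality step, namely that $[\gamma_i]$ span a direct summand over $\Z$. I would handle it through the nonseparating-union argument above.
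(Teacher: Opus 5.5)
Your proposal is correct. The lifting argument, the Betti number count and the quantum comparison are the same as in the paper: on $E$, the factor $R(h_1^{k_1}\cdots h_g^{k_g})$ acts by a $t$-independent scalar, which reduces everything to Theorem~\ref{thm:comparison}.

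The real difference is the Calabi--Yau step. The paper uses the integral residue criterion, Theorem~\ref{thm:cy}. The cover-supplied field $X$ normalizes every $\rot_X\gamma_i$ to one by \eqref{eq:rotation}. In coordinates adapted to $\mathcal M$ the conditions then become $s_{2i-1}=s_{2i}$, which all $s_i=0$ solve. Integrality is built into the cochain parametrization \eqref{eq:coordinates}, so the paper never needs to know anything about the lattice spanned by the $[\gamma_i]$.

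Your main route applies \eqref{eq:gompf} directly, which is the style the paper uses for Theorem~\ref{thm:notminimal}. Since $P$ cut along $\gamma_1,\ldots,\gamma_g$ is connected, it is a sphere with $2g+1$ holes. This gives dual curves $\eta_j$ with $\gamma_i\cdot\eta_j=\delta_{ij}$, so the span of the $[\gamma_i]$ is a direct summand. Hence arbitrary integers $\rot_v\gamma_i$ can be prescribed, and repeated cycles add no new conditions. You were right that $\F_2$-independence alone would not be enough: it only makes the index of the span in its saturation odd.

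What each buys: your argument does not depend on the branched cover or the singular framing. It shows more generally that any positive factorization whose distinct vanishing cycles are disjoint with connected complement is CY. The paper's argument stays within its cover-adapted machinery and serves as a worked instance of Theorem~\ref{thm:cy}.
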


\begin{proof}
All arcs are disjoint, and Lemma~\ref{lem:local} lifts each $h_i$,
$i>0$, to a positive Dehn twist and kills $h_0^3$.  The lifted curves
are nonseparating by Theorem~\ref{thm:pairs}, so the factorization is
allowable, and its ordered list of vanishing cycles is independent of
$t$, which fixes the PALF and its Stein deformation class.

In coordinates adapted to $\mathcal M$ the equations of
Theorem~\ref{thm:cy} are $s_{2i-1}=s_{2i}$ for $1\le i\le g$, which have
an integral solution whatever the multiplicities $k_i$, so $W$ is CY.
There are $M=\sum_i k_i$ vanishing cycles, of rank $g$ in homology, and
the PALF handle complex, with $2g$ one-handles and $M$ two-handles,
gives $b_1=2g-g$ and $b_2=M-g$.

On $E$, every $R(h_i)$ for $i>0$ is scalar because its pair channel has
been fixed, and the product of these scalars is independent of $t$.  The
projective variation is therefore exactly that of $R(h_0^{3t})|_E$,
computed in Theorem~\ref{thm:comparison}.
\end{proof}

\begin{example}[The first relative phase]
Take $g=1$ and the tuple $((01),(01),(01),(02))$.  Choose $a_1$ between
the first two points and $a_0$ between the last two.  For $k\ge1$,
\[
                         \beta_t=\sigma_1^k\sigma_3^{3t}
\]
always lifts to $\tau_{\gamma_1}^k$, with $b_1(W)=1$ and $b_2(W)=k-1$.
In the Fibonacci sector $c=\tau$, condition the first pair to have
channel $\tau$.  Then
\[
                       E=\cH_1^{\mathrm F}(\tau)\oplus \cH_2^{\mathrm F}(\tau)\cong\C^2.
\]
In the basis given by the two remaining-pair channels,
\[
 R(\beta_t)|_E
     =r_\tau^k\begin{pmatrix}r_{\one}^{3t}&0\\0&r_\tau^{3t}\end{pmatrix}.
\]
In either Ising total-charge sector, fixing the first pair already
gives a line, and this local operation has only a scalar effect
there.  In the Fibonacci vacuum
sector the first example with both remaining channels is $g=2$, with
both liftable pairs conditioned to channel $\tau$.
\end{example}

The word ``fixed'' matters in the Ising comparison, since on the full
Ising space $R(h_0^{3t})$ may change relative phases between different
\emph{geometric} channel outcomes, whereas the distinction proved here
holds \emph{after those outcomes have already been fixed}.

More generally, every Fibonacci braid operation has a realization by a
positive word in the standard braid generators that belongs to $L_p$.
Indeed, every standard cube $\sigma_j^3$ is liftable,
since its local Hurwitz action has order one or three.  As
$R(\sigma_j)^{10}=I$ and $3\cdot7\equiv1$, $3\cdot3\equiv-1$ modulo ten,
replacing $\sigma_j$ by $\sigma_j^{21}=(\sigma_j^3)^7$ and
$\sigma_j^{-1}$ by $\sigma_j^{9}=(\sigma_j^3)^3$ preserves the quantum
operation.  Thus $R(L_p)=R(B_n)$ for this representation.  Each cube lifts to a
positive triple Dehn twist or to the identity, and a single liftable arc
has a nonseparating lift by Theorem~\ref{thm:pairs}, so omitting the
identity factors leaves a positive allowable factorization.  These
realizations supply Stein fillings, whose CY property still requires
Theorem~\ref{thm:cy}.

For Ising, the ratio of the two pair phases has order four.  Replacing
$\sigma_j$ by $\sigma_j^9$ and $\sigma_j^{-1}$ by $\sigma_j^3$ therefore
preserves the projective operation and again uses only positive cubes.
Thus $\rho(L_p)=\rho(B_n)$ in both models.
Theorem~\ref{thm:comparison} distinguishes their actions after the
geometric channels have been fixed, and Theorem~\ref{thm:fixed}
realizes this distinction on a single CY filling.  This gives the
remaining pair a quantum role beyond the planar Ising pairing
construction.  The geometric input is the local kernel singled out by
connected binding, and the distinction between the models comes from
their imported fusion rules.

\section*{Declaration on the use of AI tools}
The author used ChatGPT (OpenAI) and Claude (Anthropic) during the
preparation of this work, to assist with the literature search, to verify
computations, for discussions that helped refine key arguments, and to
improve stylistic clarity.  All definitions, proofs, the research
direction and the final text remain entirely the author's own, and the
author assumes full responsibility for the contents.

\end{document}